\renewcommand*{\backref}[1]{}
\renewcommand*{\backrefalt}[4]{({\tiny%
   \ifcase #1 Not cited.%
         \or Cited on page~#2.%
         \else Cited on pages #2.%
   \fi%
   })}
\numberwithin{equation}{section}
\newcommand\mtop{1in}
\newcommand\mbottom{1in}
\newcommand\mleft{1in}
\newcommand\mright{1in}
\newtheorem{thm}{Theorem}[section]
\newtheorem{prop}[thm]{Proposition}
\newtheorem{lemma}[thm]{Lemma}
\theoremstyle{definition}
\newtheorem{defi}{Definition}
\newtheorem{rmk}{Remark}
\newcommand\reallywidehat[1]{%
\savestack{\tmpbox}{\stretchto{%
  \scaleto{%
    \scalerel*[\widthof{\ensuremath{#1}}]{\kern-.6pt\bigwedge\kern-.6pt}%
    {\rule[-\textheight/2]{1ex}{\textheight}}
  }{\textheight}%
}{0.5ex}}%
\stackon[1pt]{#1}{\tmpbox}%
}
\DeclareSymbolFont{bbold}{U}{bbold}{m}{n}
\DeclareSymbolFontAlphabet{\mathbbold}{bbold}
\def\@tocline#1#2#3#4#5#6#7{\relax
  \ifnum #1>\c@tocdepth 
  \else
    \par \addpenalty\@secpenalty\addvspace{#2}%
    \begingroup \hyphenpenalty\@M
    \@ifempty{#4}{%
      \@tempdima\csname r@tocindent\number#1\endcsname\relax
    }{%
      \@tempdima#4\relax
    }%
    \parindent\z@ \leftskip#3\relax \advance\leftskip\@tempdima\relax
    \rightskip\@pnumwidth plus4em \parfillskip-\@pnumwidth
    #5\leavevmode\hskip-\@tempdima
      \ifcase #1
       \or\or \hskip 1em \or \hskip 2em \else \hskip 3em \fi%
      #6\nobreak\relax
    \hfill\hbox to\@pnumwidth{\@tocpagenum{#7}}\par
    \nobreak
    \endgroup
  \fi}
\newcommand{\subalign}[1]{%
  \vcenter{%
    \Let@ \restore@math@cr \default@tag
    \baselineskip\fontdimen10 \scriptfont\tw@
    \advance\baselineskip\fontdimen12 \scriptfont\tw@
    \lineskip\thr@@\fontdimen8 \scriptfont\thr@@
    \lineskiplimit\lineskip
    \ialign{\hfil$\m@th\scriptstyle##$&$\m@th\scriptstyle{}##$\hfil\crcr
      #1\crcr
    }%
  }%
}
\newcommand{\R}{\mathbb{R}}
\newcommand{\Z}{\mathbb{Z}}
\newcommand{\N}{\mathbb{N}}
\newcommand{\C}{\mathbb{C}}
\newcommand{\F}{\mathbb{F}}
\newcommand{\E}{\mathbb{E}}
\newcommand{\bbone}{\mathbbold{1}}
\newcommand{\la}{\lambda}
\newcommand{\La}{\Lambda}
\newcommand{\lan}{\left\langle}
\newcommand{\ran}{\right\rangle}
\newcommand{\Y}{\mathbb{Y}}
\newcommand{\bx}{\mathbf{x}}
\newcommand{\cP}{\mathbb{Y}}
\DeclareMathOperator{\len}{len}
\newcommand{\sqbinom}[2]{\begin{bmatrix}#1\\ #2\end{bmatrix}}
\newcommand{\Sur}{\operatorname{Sur}}
\newcommand{\cA}{\mathcal{A}}
\title{Symmetric functions and the explicit moment problem for abelian groups}
\author{Roger Van Peski}
\email{\textcolor{blue}{\href{rogervanpeski@gmail.com}{rogervanpeski@gmail.com}}}
\address{Department of Mathematics, Columbia University,USA}
\date{\today}
\begin{document}

\maketitle

\begin{abstract}
Recently, Sawin and Wood \cite{sawin2022moment} proved a formula for the distribution of a random abelian group $G$ in terms of its $H$-moments $\mathbb{E} [\#\operatorname{Sur}(G,H)]$. We show that properties of Macdonald polynomials yield an alternate proof.
\end{abstract}

\tableofcontents

\section{Introduction}

The purpose of this note is to point out a link between two directions of recent work on random groups: the moment method on the one hand, and combinatorial techniques coming from Macdonald symmetric functions on the other.

Random or pseudorandom groups appear across number theory, combinatorics, and topology, see e.g. \cite{wood2023probability,sawin2022moment} and the references therein. In any of these settings, one typically has a sequence of random groups $(G_n)_{n \geq 1}$, and wishes to show convergence in distribution to a universal limit random group $G$. The moment method is a technique for showing this convergence by showing that for every fixed group\footnote{with appropriate modifiers such as finite, abelian, etc.} $H$, the so-called \emph{$H$-moments} $\E[\#\Sur(G_n,H)]$ converge to their limiting values $\E[\#\Sur(G,H)]$. Under suitable growth hypotheses on the moments $\E[\#\Sur(G,H)]$, they uniquely determine the group $G$, and the above moment convergence implies convergence in distribution $G_n \to G$. This was first shown by Wood \cite{wood2017distribution}, and has since become a standard tool to study random groups, see for instance Wood \cite{wood2015random,wood2018cohen}, Nguyen-Wood \cite{nguyen2022random,nguyen2022local}, Nguyen and the author \cite{nguyen2022universality}, M\'esz\'aros \cite{meszaros2020distribution,meszaros2023cohen}, Cheong-Huang \cite{cheong2021cohen} -Kaplan \cite{cheong2022generalizations} and -Yu \cite{cheong2023cokernel}, and Lee \cite{lee2022universality,lee2023joint} for more recent works using the moment method.

The problem of determining whether a random group exists with given moments, and whether it is unique, is often referred to as the moment problem for random groups (again with suitable modifiers such as finite, abelian, etc.). Sufficient conditions were given in the above-mentioned work, but the moment method still requires that one has already identified the candidate limiting group $G$ and computed its moments. Given a nice enough list of numbers $M_H$ indexed by finite abelian groups $H$, results of \cite{wood2017distribution} guaranteed that there is a unique random group $G$ with moments $\E[\#\Sur(G,H)]=M_H$ for every $H$, but it was not at all clear how to find the distribution of $G$, i.e. find probabilities such as $\Pr(G \cong \Z/9\Z)$. We refer to this latter problem as the \emph{explicit moment problem}. The solution to the explicit moment problem was treated by Sawin-Wood \cite{sawin2022finite} in a related context, and shortly after was systematized in a general framework of so-called diamond categories in \cite{sawin2022moment}. The latter framework applies to finite abelian groups as well as many other contexts, and gives the probability that a random object takes a given value as a certain nontrivial infinite linear combination of its moments.

For simplicity consider random abelian $p$-groups $G$ (i.e. $\#G$ is a power of $p$). Any fixed abelian $p$-group $H$ is isomorphic to $\bigoplus_{i \geq 1} \Z/p^{\la_i}\Z$ for some integers $\la_1 \geq \la_2 \geq \ldots \geq 0$, where only finitely many $\la_i$ are nonzero. Such nonnegative, eventually-zero sequences $\la = (\la_1,\la_2,\ldots)$ are called integer partitions, and we denote the set of them by $\Y$. Given a partition $\la \in \Y$, define the conjugate partition $\la' = (\la_1',\la_2',\ldots)$ by $\la_i':= \#\{j: \la_j \geq i\}$, see \Cref{fig:ferrers}.

\begin{figure}[H]
\centering
\begin{minipage}[b]{30ex}
    \centering
    \[\begin{ytableau} *(white) & *(white) & *(white) & *(white) & *(white)  \\ *(white) & *(white) \\ *(white) & *(white) \\ *(white)  \end{ytableau} \]
    \caption*{$\la = (5,2,2,1)$}
    \label{fig:minipage1}
  \end{minipage}
  \begin{minipage}[b]{0.45\textwidth}
    \centering
 \begin{ytableau} *(white) & *(white) & *(white) & *(white)  \\ *(white) & *(white) & *(white) \\ *(white) \\ *(white) \\ *(white)  \end{ytableau}
    \caption*{$\la' = (4,3,1,1,1)$}
    \label{fig:minipage2}
  \end{minipage}
\caption{A partition $\la \in \Y$ may be drawn as a Ferrers diagram with row lengths corresponding to parts $\la_i$. The diagram of the conjugate partition $\la'$ defined above is then given by reflecting this one.} \label{fig:ferrers}
\end{figure}
~Additionally, define
\begin{align}
\begin{split}\label{eq:intro_notation}
G_\la &:= \bigoplus_i \Z/p^{\la_i}\Z \quad \quad \quad \quad \text{(the prime $p$ is implicit in this notation)} \\ 
|\la| &:= \sum_i \la_i \\ 
n(\la) &:= \sum_{i \geq 1} (i-1)\la_i = \sum_{i \geq 1} \binom{\la_i'}{2}.
\end{split}
\end{align}
Then the distribution of a random abelian $p$-group $G$ is given in terms of its moments as follows, where we use $q$-Pochhammer notation
\begin{equation}\label{eq:def_qp}
(a;q)_n := \prod_{i=1}^n (1-aq^{i-1}).
\end{equation}

\begin{thm}\label{thm:invert_moments_no_level}
Fix $p$ prime and let $G$ be a random finite abelian $p$-group with finite $H$-moments $M_H := \E[\#\Sur(G,H)]$ for each finite abelian $p$-group $H$. Then for any $\nu \in \Y$,
\begin{equation}\label{eq:invert_moments_no_level}
\Pr(G \cong G_\nu) = \sum_{\substack{\mu \in \Y: \\ \mu_1' \geq \nu_1' \geq \mu_2' \geq \nu_2' \geq \ldots}} \frac{(-1)^{|\mu|-|\nu|} p^{-n(\nu)-n(\mu)-|\mu|}}{\prod_{i \geq 1} (p^{-1};p^{-1})_{\mu_i'-\nu_i'}(p^{-1};p^{-1})_{\nu_i'-\mu_{i+1}'}} M_{G_\mu},
\end{equation}
provided that the sum on the right hand side converges absolutely. Furthermore, the law of $G$ is the unique probability measure with moments $M_H$.
\end{thm}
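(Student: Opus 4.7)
The strategy is to reduce the inversion formula to a combinatorial identity expressible in the language of Hall--Littlewood symmetric functions at $t = p^{-1}$, and then verify it via a Cauchy- or Pieri-type identity.

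First, I substitute the definition $M_{G_\mu} = \sum_\la \#\Sur(G_\la, G_\mu)\,\Pr(G \cong G_\la)$ into the right-hand side of \eqref{eq:invert_moments_no_level}. The assumed absolute convergence justifies interchanging the two sums by Fubini, so the theorem reduces to verifying that for every $\la,\nu \in \Y$,
\begin{equation*}
\sum_{\substack{\mu \in \Y: \\ \mu_1' \geq \nu_1' \geq \mu_2' \geq \nu_2' \geq \cdots}} \frac{(-1)^{|\mu|-|\nu|}\, p^{-n(\nu)-n(\mu)-|\mu|}}{\prod_{i\geq 1}(p^{-1};p^{-1})_{\mu_i'-\nu_i'}\,(p^{-1};p^{-1})_{\nu_i'-\mu_{i+1}'}}\, \#\Sur(G_\la, G_\mu) = \delta_{\nu,\la}.
\end{equation*}

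Second, I interpret both the coefficient and the surjection count in Hall--Littlewood terms. The interlacing condition $\mu_1' \geq \nu_1' \geq \mu_2' \geq \nu_2' \geq \cdots$ is exactly the condition that $\mu/\nu$ is a vertical strip, and the $q$-Pochhammer denominator, together with the factors $p^{-n(\mu)},\, p^{-|\mu|}$ and the sign $(-1)^{|\mu|-|\nu|}$, should be matched (up to explicit normalization) with the signed Hall--Littlewood branching coefficients $\psi'_{\mu/\nu}(t)$ appearing in the $e_r$-Pieri rule at $t = p^{-1}$, as in Macdonald Ch.~III. Simultaneously, by the classical Hall--Macdonald correspondence, $\#\Sur(G_\la, G_\mu) = |\Aut(G_\mu)| \cdot (\text{number of subgroups of } G_\la \text{ with quotient } G_\mu)$ and the latter count is a Hall polynomial, which is itself a structure constant for products of Hall--Littlewood $P$- or $Q$-polynomials at $t = p^{-1}$.

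Third, I apply the relevant symmetric function identity. I expect the alternating, vertical-strip-weighted sum to arise as a principal specialization of the Cauchy identity
\begin{equation*}
\prod_{i,j}\frac{1 - t x_i y_j}{1 - x_i y_j} \;=\; \sum_{\kappa \in \Y} P_\kappa(x;t)\, Q_\kappa(y;t),
\end{equation*}
with $x_i = t^{i-1}$ (or a similar geometric progression chosen so that $Q_\kappa(x;t)$ reproduces the moment-side weight), together with an appropriate choice of $y$-variables; after the specialization, Hall--Littlewood orthogonality collapses the resulting sum over $\kappa$ to $\delta_{\nu,\la}$. The alternating sign and the $(p^{-1};p^{-1})$-Pochhammers match the shape of such Cauchy-type expansions precisely.

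Finally, uniqueness of the measure given its $H$-moments follows from the standard moment-determinacy results of \cite{wood2017distribution}, valid because all $H$-moments are assumed finite and the absolute convergence hypothesis controls tail growth. The main technical obstacle I anticipate is the second step: matching the explicit Hall--Littlewood branching/Pieri coefficients and carefully tracking the normalizations (powers of $p$ coming from $n(\nu), n(\mu), |\mu|$, the signs, and the automorphism factors $|\Aut(G_\mu)|$) so that the Cauchy identity applies cleanly. Once this combinatorial bookkeeping is in place, the orthogonality is immediate.
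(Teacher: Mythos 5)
Your overall skeleton is right: substitute the definition of the moments, interchange sums by absolute convergence, reduce to the combinatorial identity
\begin{equation*}
\sum_{\mu:\, \mu'\succ\nu'} \frac{(-1)^{|\mu|-|\nu|}\,p^{-n(\nu)-n(\mu)-|\mu|}}{\prod_i(p^{-1};p^{-1})_{\mu_i'-\nu_i'}(p^{-1};p^{-1})_{\nu_i'-\mu_{i+1}'}}\,\#\Sur(G_\la,G_\mu)=\bbone(\la=\nu),
\end{equation*}
and then recognize $\#\Sur(G_\la,G_\mu)$ as a ratio of specialized Hall--Littlewood polynomials. That much matches the paper exactly. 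The genuine gap is in your third step: you propose that a principally specialized Cauchy identity plus Hall--Littlewood orthogonality will collapse the sum to $\delta_{\nu,\la}$, but this is the wrong mechanism, and it is not at all clear how to make it work. The Cauchy kernel $\sum_\kappa P_\kappa(x)Q_\kappa(y)$ sums over a single outer partition $\kappa$, whereas the sum above runs over an \emph{intermediate} partition $\mu$ sandwiched between $\la$ and $\nu$ in the branching. The identity that actually does the job is the skew branching rule $\sum_\mu P_{\la/\mu}(\phi)P_{\mu/\nu}(\phi')=P_{\la/\nu}(\phi,\phi')$, applied with a pair of specializations $\phi=\alpha(u,ut,\ldots)$ and $\phi'=\beta(-u,-uq,\ldots)$ chosen so that the \emph{combined} specialization kills every power sum $p_k$. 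Then $P_{\la/\nu}(\phi,\phi')=\bbone(\la=\nu)$ for the trivial reason that $P_{\la/\nu}$ has no constant term unless $\la=\nu$, and no orthogonality or Cauchy identity is invoked. This ``zero specialization'' observation is the key idea of the paper's proof (its Proposition~\ref{thm:specs_cancel}), and your plan does not contain it; ``orthogonality is immediate'' is precisely the step that is not immediate from what you have set up.

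A second, smaller point: you attempt to match the coefficients on the right-hand side of \eqref{eq:invert_moments_no_level} with Hall--Littlewood Pieri coefficients $\psi'_{\mu/\nu}(t)$. In fact, as the paper shows, these coefficients are the one-variable $q$-Whittaker branching coefficients $Q_{\mu'/\nu'}(-t;t,0)$ (Macdonald parameters with $t=0$), i.e.\ they live at the dual specialization $t=0$ rather than $q=0$. This is an instance of the beta/alpha duality in \Cref{thm:specialize_mac_poly}, and explains why a single-parameter Hall--Littlewood matching does not close up cleanly. So the two missing ingredients are: (i) the branching-rule-plus-zero-specialization mechanism in place of Cauchy/orthogonality, and (ii) recognizing the $q$-Whittaker (not Hall--Littlewood) nature of the inversion coefficients via the $P\leftrightarrow Q'$ involution. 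Your framing of the uniqueness claim and the reduction via Fubini are fine.
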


\Cref{thm:invert_moments_no_level} applies with the same proof to modules over any complete discrete valuation ring with finite residue field, such as $\F_q\llbracket T \rrbracket$, see \Cref{rmk:dvr}; we state it for abelian $p$-groups ($\Z_p$-modules) here for concreteness only. One can also easily extend \Cref{thm:invert_moments_no_level} to groups with order divisible by multiple primes by including a product over primes in \eqref{eq:invert_moments_no_level}, see \Cref{thm:multiple_primes} for a precise statement. A moment inversion formula of the form \eqref{eq:invert_moments_no_level} was shown previously in \cite[Proposition 6.2]{sawin2022moment}\footnote{We note that \cite{sawin2022moment} proves more about this formula: they show that given a collection of real numbers $M_H$ (not \emph{a priori} given by moments) with \eqref{eq:invert_moments_no_level} converging absolutely, then there exists a measure with those moments if and only if the right hand side of \eqref{eq:invert_moments_no_level} is always nonnegative. They also show that convergence of moments to $M_H$ suffices to conclude convergence in distribution to $G$ (`robustness'). We do not try to reprove these facts here, as our interest is in the connection between the combinatorics of \eqref{eq:invert_moments_no_level} and symmetric functions.}, and our purpose here is to show that it also follows directly from properties of Macdonald symmetric functions. 

The Macdonald polynomials $P_\la(x_1,\ldots,x_n;q,t)$ are certain symmetric polynomials in $n$ variables $x_1,\ldots,x_n$, indexed by integer partitions $\la = (\la_1 \geq \la_2 \geq \ldots \geq 0)$, and featuring two extra parameters $q,t$. When $q=t$ they become the well-known Schur polynomials. At general $q,t$ their combinatorics has provided tools to analyze eigenvalues of random matrices and various exactly solvable models in statistical mechanics, beginning with work of Borodin-Corwin \cite{borodin2014macdonald}, see also Borodin-Petrov \cite{borodin2014integrable} and the references therein. 

The case $q=0$, known as the \emph{Hall-Littlewood polynomials}, encodes the combinatorics of abelian $p$-groups when $t=1/p$ through the so-called Hall algebra, see \cite[Chapters II and III]{mac} and \cite[Section 6]{nguyen2022universality}. The relation between combinatorics of abelian $p$-groups and Hall-Littlewood polynomials is related to the latter's role as spherical functions on $p$-adic groups, see \cite[Chapter V]{mac}, and Hall-Littlewood polynomials also appeared in random matrix theory over finite fields in work of Fulman \cite{fulman_main,fulman2016hall}, and over the $p$-adic numbers in Delaunay-Jouhet \cite{delaunay2014p} and Fulman-Kaplan \cite{fulman2018random}.

A series of works \cite{van2020limits,vanpeski2021halllittlewood,nguyen2022universality,van2023local} used the Hall-Littlewood case to study random abelian $p$-groups arising in random matrix theory over $\Z_p$ and $\Z$. After the computations for the most recent of these works \cite{van2023local}, we realized that a certain key symmetric function lemma, \cite[Lemma 4.2]{van2023local}, actually yields a special case of the moment inversion formula \Cref{thm:invert_moments_no_level} for a certain specific measure on abelian $p$-groups (though it was not phrased in this way in \cite{van2023local}), and the proof generalizes to other measures as well. We emphasize that \cite[Lemma 4.2]{van2023local} appeared in that work as a computational tool with no group-theoretic interpretation, and it was a surprise to us that it led to a symmetric function interpretation and proof of the moment inversion formula for abelian $p$-groups. 

In a sense our proof of \Cref{thm:invert_moments_no_level} and the proof of \cite[Proposition 6.2]{sawin2022moment} have the same combinatorial core. One has
\begin{equation}
\E[\#\Sur(G,G_\mu)] = \sum_{\substack{\nu \in \Y: \nu_i \geq \mu_i \text{ for all }i}} \Pr(G \cong G_\nu) \#\Sur(G_\nu,G_\mu)
\end{equation}
where the condition $\nu_i \geq \mu_i \text{ for all }i$ comes because $\#\Sur(G_\nu,G_\mu)$ is zero otherwise. In other words, the moments $\E[\#\Sur(G,G_\mu)]$ are upper-triangular linear combinations of the probabilities $\Pr(G \cong G_\nu)$, and the problem becomes how to invert this system, i.e. how to take the right (infinite) linear combination of moments such that all but one of the $\Pr(G \cong G_\nu)$ terms cancel. In \cite{sawin2022moment} this combinatorics is understood using M\"obius functions, and here we show that it is also encoded in the ring of symmetric functions. 

The techniques here and in \cite{sawin2022moment} both apply in the case of finite modules over a complete discrete valuation ring with finite residue field. This is their only intersection of which we are currently aware, but interestingly, both ways of understanding \Cref{thm:invert_moments_no_level} generalize in different directions. The combinatorics of \cite{sawin2022moment} is proven in the much more general diamond category setting, while the symmetric function combinatorics underlying our proof holds in the Macdonald setting before specializing to the Hall-Littlewood one, see \Cref{thm:specs_cancel}. We note that Macdonald polynomials, at other special values of the parameters $q$ and $t$, appear in a variety of contexts; it would be interesting to see whether the combinatorial skeleton here also specializes to nontrivial results in any of these.

\textbf{Acknowledgments.} We thank Alexei Borodin for helpful conversations, encouragement to write this note, and comments on it, Hoi H. Nguyen for useful comments and questions, Will Sawin for several helpful remarks on an earlier version, and Melanie Matchett Wood for discussions regarding the explicit moment problem and for pointing us to \cite{sawin2022finite} (and later \cite{sawin2022moment}) after we found the proof here. This work was partially supported by an NSF Graduate Research Fellowship under grant \#1745302, and was written up while supported by the European Research Council (ERC), Grant Agreement No. 101002013.

\section{Macdonald symmetric functions}\label{sec:macdonald_background}

In this section we introduce standard facts about symmetric functions from \cite{mac} and specializations of symmetric functions from \cite{borodin2014macdonald}. 

\subsection{Partitions and symmetric polynomials.} We denote by $\cP$ the set of all integer partitions $(\la_1,\la_2,\ldots)$, i.e. sequences of nonnegative integers $\la_1 \geq \la_2 \geq \cdots$ which are eventually $0$. We call the integers $\la_i$ the \emph{parts} of $\la$, set $\la_i' = \#\{j: \la_j \geq i\}$, and write $m_i(\la) = \#\{j: \la_j = i\} = \la_i'-\la_{i+1}'$. We write $\len(\la)$ for the number of nonzero parts, and denote the set of partitions of length $\leq n$ by $\cP_n$. We write $\mu \prec \la$ or $\la \succ \mu$ if $\la_1 \geq \mu_1 \geq \la_2 \geq \mu_2 \geq \cdots$, and refer to this condition as \emph{interlacing}. A stronger partial order is defined by containment of Ferrers diagrams (see \Cref{fig:ferrers}), which we write as $\mu \subset \la$, meaning $\mu_i \leq \la_i$ for all $i$. Finally, we denote the partition with all parts equal to zero by $\emptyset$. 

We denote by $\La_n$ the ring $\C[x_1,\ldots,x_n]^{S_n}$ of symmetric polynomials in $n$ variables $x_1,\ldots,x_n$. For a symmetric polynomial $f$, we will often write $f(\bx)$ for $f(x_1,\ldots,x_n)$ when the number of variables is clear from context. We will also use the shorthand $\bx^\la:= x_1^{\la_1} x_2^{\la_2} \cdots x_n^{\la_n}$ for $\la \in \cP_n$. A simple $\C$-basis for $\La_n$ is given by the \emph{monomial symmetric polynomials} $\{m_\la(\bx): \la \in \Y_n\}$ defined by 
\[
m_\la(\bx) = \sum_{\sigma \in S_n} \sigma(\bx^\la)
\]
where $\sigma$ acts by permuting the variables. It is also a very classical fact that the power sum symmetric polynomials 
\[p_k(\bx) = \sum_{i=1}^n x_i^k, k =1,\ldots,n\]
are algebraically independent and algebraically generate $\La_n$, and so by defining 
\begin{equation*}
    p_\la(\bx) := \prod_{i \geq 1} p_{\la_i}(\bx)
\end{equation*}
for $\la \in \Y$ with $\la_1 \leq n$, we have that $\{p_\la(\bx): \la_1 \leq n\}$ forms another basis for $\La_n$.

\subsection{Symmetric functions.} It is often convenient to consider symmetric polynomials in an arbitrarily large or infinite number of variables, which we formalize as follow, heavily borrowing from the introductory material in \cite{van2022q}. One has a chain of maps
\[
\cdots \to \La_{n+1} \to \La_n \to \La_{n-1} \to \cdots \to 0
\]
where the map $\La_{n+1} \to \La_n$ is given by setting $x_{n+1}$ to $0$. 
In fact, writing $\La_n^{(d)}$ for symmetric polynomials in $n$ variables of total degree $d$, one has 
\[
\cdots \to \La_{n+1}^{(d)} \to \La_n^{(d)} \to \La_{n-1}^{(d)} \to \cdots \to 0
\]
with the same maps. The inverse limit $\La^{(d)}$ of these systems may be viewed as symmetric polynomials of degree $d$ in infinitely many variables. From the ring structure on each $\La_n$ one gets a natural ring structure on $\La := \bigoplus_{d \geq 0} \La^{(d)}$, and we call this the \emph{ring of symmetric functions}. Because $p_k(x_1,\ldots,x_{n+1}) \mapsto p_k(x_1,\ldots,x_n)$ and $m_\la(x_1,\ldots,x_{n+1}) \mapsto m_\la(x_1,\ldots,x_n)$ (for $n \geq \len(\la)$) under the natural map $\La_{n+1} \to \La_n$, these families of symmetric polynomials define symmetric functions $p_k, m_\la \in \La$. An equivalent definition of $\La$ is $\Lambda := \C[p_1,p_2,\ldots]$ where $p_i$ are indeterminates; under the natural map $\Lambda \to \Lambda_n$ one has $p_i \mapsto p_i(x_1,\ldots,x_n)$. We will often denote symmetric functions such as $p_\la,m_\la$ by $p_\la(\bx),m_\la(\bx)$, where the notation $\bx$ is meant to evoke the variables $x_1,\ldots,x_n$ in the specialization $\La \to \La_n$.

\subsection{Macdonald symmetric functions and polynomials.} Another special basis for $\La$ is given by the \emph{Macdonald symmetric functions} $P_\la(\bx;q,t)$, which depend on two additional parameters $q$ and $t$ which may be treated as indeterminates or complex numbers, though in probabilistic contexts we take $q,t \in (-1,1)$. Our first definition of them requires a certain scalar product on $\La$.

\begin{defi}
We define a scalar product $\lan \cdot, \cdot \ran_{q,t}$ on $\La$ by specifying its values on the basis $p_\la(\bx)$, which are 
\begin{equation}
\lan p_\la, p_\mu \ran_{q,t} := \bbone(\la=\mu) \prod_{i=1}^{\len(\la)} \frac{1-q^{\la_i}}{1-t^{\la_i}} \prod_{i \geq 1} i^{m_i(\la)}(m_i(\la))!,
\end{equation}
where $\bbone(\cdots)$ is the indicator function.
\end{defi}

\begin{defi}\label{def:mac_poly_torus}
The \emph{Macdonald symmetric functions} $P_\la(\bx;q,t), \la \in \Y$ are defined by the following two properties:
\begin{enumerate}
\item They are `monic' and upper-triangular with respect to the $m_\la(\bx)$ basis, in the sense that they expand as 
\begin{equation}
P_\la(\bx;q,t) = m_\la(\bx) + \sum_{\mu < \la} R_{\la \mu}(q,t) m_\mu(\bx)
\end{equation}
for some coefficients $R_{\la,\mu}(q,t)$, where $<$ denotes the lexicographic order.
\item They are orthogonal with respect to $\lan \cdot, \cdot \ran_{q,t}$. 
\end{enumerate}
\end{defi}

These conditions \emph{a priori} overdetermine the set $\{P_\la(\bx;q,t): \la \in \Y_n\}$, and it is a theorem \cite[VI (4.7)]{mac} that the Macdonald symmetric polynomials do indeed exist. It is then also clear that they form a basis for $\La$, since the $m_\la(\bx)$ do. The images of $P_\la(\bx;q,t)$ under the natural map $\La \to \La_n$ are called Macdonald polynomials, and denoted $P_\la(x_1,\ldots,x_n;q,t)$.

\begin{defi}
Define the dual Macdonald functions
\begin{equation}
Q_\la(\bx;q,t) := \frac{P_\la(\bx;q,t)}{\lan P_\la, P_\la \ran_{q,t}}.
\end{equation}
\end{defi}

Because the $P_\la$ form a basis for the vector space of symmetric polynomials in $n$ variables, there exist symmetric polynomials $P_{\la/\mu}(x_1,\ldots,x_{n-k};q,t) \in \La_{n-k}$ indexed by $\la \in \Y_{n+k}, \mu \in \Y_n$ which are defined by
\begin{equation}\label{eq:def_skewP}
    P_\la(x_1,\ldots,x_{n+k};q,t) = \sum_{\mu \in \Y_n} P_{\la/\mu}(x_{n+1},\ldots,x_{n+k};q,t) P_\mu(x_1,\ldots,x_n;q,t),
\end{equation}
often called the \emph{branching rule}. It follows easily from \eqref{eq:def_skewP} that for any $1 \leq d \leq k-1$,
\begin{equation}\label{eq:gen_branch}
    P_{\la/\mu}(x_1,\ldots,x_k;q,t) = \sum_{\nu \in \Y_{n+d}} P_{\la/\nu}(x_{d+1},\ldots,x_k;q,t) P_{\nu/\mu}(x_1,\ldots,x_d;q,t).
\end{equation}
We define $Q_{\la/\mu}$ by \eqref{eq:def_skewP} with $Q$ in place of $P$, and it is similarly clear that \eqref{eq:gen_branch} holds for $Q$. For any $\la,\nu \in \Y$, 
\begin{equation}\label{eq:skewP_consistent}
P_{\la/\mu}(x_1,\ldots,x_k,0;q,t) = P_{\la/\mu}(x_1,\ldots,x_k;q,t)
\end{equation}
for all large enough $k$, and similarly for $Q_{\la/\mu}$, hence there are symmetric functions $P_{\la/\mu}(\bx;q,t)\in \La$ and $Q_{\la/\mu}(\bx;q,t) \in \La$ which map to $P_{\la/\mu}(x_1,\ldots,x_k;q,t),Q_{\la/\mu}(x_1,\ldots,x_k;q,t)$ under the maps $\La \to \La_k$. See \cite{mac} for details.

\subsection{Specializations.} Given a symmetric polynomial $f \in \La_n$, one may substitute scalars for the variables $x_1,\ldots,x_n$ to obtain a complex number. We would like to imitate this in the setting of symmetric functions, i.e. we would like homomorphisms $\phi: \La \to \C$, which are called \emph{specializations} of $\La$. One such class of specializations is given by composing one of the projection maps $\La \to \La_n$ with the map $\La_n \to \C$ given by substituting complex numbers in for the variables, but there are others.

In probabilistic settings, one additionally often wants such homomorphisms to take nonnegative values on the Macdonald polynomials $P_\la,Q_\la$, called \emph{Macdonald-nonnegative specializations}. A full classification of these was conjectured by Kerov \cite{kerov1992generalized} and proven by Matveev \cite{matveev2019macdonald}. We describe them now: they are associated to triples of $\{\alpha_n\}_{n \geq 1}, \{\beta_n\}_{n \geq 1},\tau$ such that $\tau \in \R_{\geq 0}$, $0 \leq \alpha_n,\beta_n$ for all $ n \geq 1$, and $\sum_n \alpha_n, \sum_n \beta_n < \infty$. These are typically called usual (or alpha) parameters, dual (or beta) parameters, and the Plancherel parameter respectively. Given such a triple, the corresponding specialization is defined by 
\begin{align}\label{eq:p_specs}
\begin{split}
    p_1 &\mapsto \sum_{n \geq 1} \alpha_n + \frac{1-q}{1-t}\left(\tau + \sum_{n \geq 1} \beta_n\right) \\
    p_k &\mapsto \sum_{n \geq 1} \alpha_n^k + (-1)^{k-1}\frac{1-q^k}{1-t^k}\sum_{n \geq 1} \beta_n^k \quad \quad \text{ for all }k \geq 2.
\end{split}
\end{align}

\begin{rmk}\label{rmk:sub_scalars}
If the $\beta_i$ and $\gamma$ are all $0$ and there is a finite number of nonzero parameters $\alpha_1,\ldots,\alpha_n$, then the specialization of \eqref{eq:p_specs} is given by substituting those parameters for the variables in $p_k(x_1,\ldots,x_n)$.
\end{rmk}

For arbitrary complex numbers $\alpha_n,\beta_n$ and $\tau$ satisfying convergence conditions, \eqref{eq:p_specs} defines a specialization $\La \to \C$, but it will not in general be nonnegative. We will often consider such specializations, and when we give a triple of parameters $\{\alpha_n\}_{n \geq 1}, \{\beta_n\}_{n \geq 1},\tau$, we will not assume they define a Macdonald-nonnegative specialization unless specifically stated. For instance, in the following definition we do not assume the parameters define a Macdonald-nonnegative specialization.

\begin{defi}\label{def:gen_spec}
For the specialization $\theta:\La \to \C$ defined by the triple $\{\alpha_n\}_{n \geq 1}, \{\beta_n\}_{n \geq 1}, \tau$, and $f(\bx) \in \La$, we write
\begin{equation}\label{eq:spec_argument_notation}
f(\alpha(\alpha_1,\alpha_2,\ldots),\beta(\beta_1,\beta_2,\ldots),\gamma(\tau)) := f(\theta) := \theta(f)
\end{equation}
We will omit the $\alpha(0,0,\ldots)$ in notation if all alpha parameters are zero for the given specialization, and similarly for $\beta(0,0,\ldots)$ and $\gamma(0)$. Also, given two specializations $\theta,\theta'$, we will write
\begin{equation}\label{eq:sum_spec}
 f(\theta,\theta') := \theta(f)+\theta'(f).
 \end{equation}
\end{defi}

Note that for the specializations $\theta,\theta'$ as in \Cref{def:gen_spec}, the specialization of \eqref{eq:sum_spec} is also of the form of \Cref{def:gen_spec}; its (multi-)set of $\alpha$ parameters is just the multiset union of the alpha parameters of $\theta$ and of $\phi$, similarly for $\beta$ parameters, and its Plancherel parameter is just $\tau+\tau'$ where $\tau,\tau'$ are the Plancherel parameters of $\theta,\theta'$. 

We refer to a specialization of the form \eqref{eq:p_specs} as \emph{pure alpha} if $\tau$ and all $\beta_n, n \geq 1$ are $0$, and \emph{pure beta} if $\tau$ and all $\alpha_n, n \geq 1$ are $0$, and \emph{Plancherel} if only $\tau$ is nonzero. Note that the notations \eqref{eq:spec_argument_notation} and \eqref{eq:sum_spec} are consistent, as the specialization defined by the triple $\{\alpha_n\}_{n \geq 1}, \{\beta_n\}_{n \geq 1}, \tau$ is a sum of three specializations, one pure alpha, one pure beta, one Plancherel. On Macdonald symmetric functions, the pure alpha and pure beta specializations act as follows. 

\begin{prop}\label{thm:specialize_mac_poly}
Let $\la,\mu \in Y$ and $c_1,\ldots,c_n \in \C$. Then
\begin{align}\label{eq:spec_mac_pol}
\begin{split}
P_{\la/\mu}(\alpha(c_1,\ldots,c_n);q,t) &= P_{\la/\mu}(c_1,\ldots,c_n;q,t) \\ 
Q_{\la/\mu}(\alpha(c_1,\ldots,c_n);q,t) &= Q_{\la/\mu}(c_1,\ldots,c_n;q,t) \\ 
P_{\la/\mu}(\beta(c_1,\ldots,c_n);q,t) &= Q_{\la'/\mu'}(c_1,\ldots,c_n;t,q) \\ 
Q_{\la/\mu}(\beta(c_1,\ldots,c_n);q,t) &= P_{\la'/\mu'}(c_1,\ldots,c_n;t,q),
\end{split}
\end{align}
where in each case the left hand side is a specialized skew Macdonald symmetric function while the right hand side is a skew Macdonald polynomial with complex numbers plugged in for the variables.
\end{prop}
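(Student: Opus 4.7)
The plan is to handle the alpha and beta specializations separately. First, I would dispense with the first two equalities in \eqref{eq:spec_mac_pol}, which are essentially tautological: by \Cref{rmk:sub_scalars}, a pure alpha specialization with finitely many nonzero parameters $c_1,\ldots,c_n$ acts on each generator $p_k \in \Lambda$ by sending it to $p_k(c_1,\ldots,c_n)$, and hence by the ring-homomorphism property sends any $f \in \Lambda$ to its image under the projection $\Lambda \to \Lambda_n$ followed by the substitution $x_i \mapsto c_i$. Applying this with $f = P_{\la/\mu}(\bx;q,t)$ or $Q_{\la/\mu}(\bx;q,t)$ yields the claim, using the fact noted just after \eqref{eq:skewP_consistent} that these symmetric functions project under $\Lambda \to \Lambda_n$ to the corresponding skew Macdonald polynomials.

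For the beta specializations, the plan is to invoke Macdonald's involution $\omega_{q,t}\colon \Lambda \to \Lambda$, which is defined on the power-sum generators by
\begin{equation}
\omega_{q,t}(p_k) = (-1)^{k-1} \frac{1-q^k}{1-t^k} p_k.
\end{equation}
The key facts I would quote from Macdonald's book are the identities $\omega_{q,t}(P_{\la/\mu}(\bx;q,t)) = Q_{\la'/\mu'}(\bx;t,q)$ and $\omega_{q,t}(Q_{\la/\mu}(\bx;q,t)) = P_{\la'/\mu'}(\bx;t,q)$; see \cite[VI.(5.1), (5.8) and Ex.~24]{mac}. Comparing \eqref{eq:p_specs} with the formula above, the pure beta specialization at parameters $c_1,\ldots,c_n$ factors as $\omega_{q,t}$ followed by the pure alpha specialization at $(c_1,\ldots,c_n)$: that is, for every $f \in \Lambda$,
\begin{equation}
f(\beta(c_1,\ldots,c_n);q,t) = (\omega_{q,t} f)(c_1,\ldots,c_n).
\end{equation}
Taking $f$ to be $P_{\la/\mu}(\bx;q,t)$ or $Q_{\la/\mu}(\bx;q,t)$ and applying the already-proved alpha part of the proposition (with parameters $(t,q)$ rather than $(q,t)$) to the right-hand side gives the last two equalities.

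The main difficulty here is not computational but rather bookkeeping: one must carefully track the swap of Macdonald parameters $(q,t) \leftrightarrow (t,q)$ alongside the conjugation $(\la,\mu) \leftrightarrow (\la',\mu')$, and one must cite the nontrivial identity $\omega_{q,t}(P_{\la/\mu}(\bx;q,t)) = Q_{\la'/\mu'}(\bx;t,q)$ from Macdonald's book as a black box. By contrast, the factorization of the pure beta specialization through $\omega_{q,t}$ is immediate from matching coefficients on each $p_k$, since both beta specializations and $\omega_{q,t}$ are ring homomorphisms determined by their action on the algebraically independent generators $p_k$.
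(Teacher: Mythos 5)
Your proposal is correct and follows essentially the same approach as the paper, which itself gives only a terse proof sketch: the alpha case by \Cref{rmk:sub_scalars} and \eqref{eq:p_specs}, and the beta case via the involution $\omega_{q,t}$ from \cite[Chapter VI]{mac}. You have simply filled in the details — factoring the pure beta specialization as $\omega_{q,t}$ followed by a pure alpha specialization, and invoking Macdonald's duality $\omega_{q,t}(P_{\la/\mu}(\bx;q,t)) = Q_{\la'/\mu'}(\bx;t,q)$ — exactly as the paper indicates.
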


The alpha case of \eqref{eq:spec_mac_pol} is clear from \eqref{eq:p_specs} in view of \Cref{rmk:sub_scalars}. The beta case follows from properties of a certain involution on $\La$, see \cite[Chapter VI]{mac}, and shows that the beta parameters should still be viewed as substituting variables into a symmetric polynomial, after applying this involution. Note that the parameters $q,t$ are interchanged in the beta version, lines $3$ and $4$ of \eqref{eq:spec_mac_pol}.

The next result is in some sense trivial once notation is set up, and is also what underlies our proof of \Cref{thm:invert_moments_no_level}.

\begin{prop}\label{thm:specs_cancel}
Let $u \in \C$. Then any $\la,\mu \in \Y$,
\begin{align}\label{eq:branch_ind}
\sum_{\nu: \mu \subset \nu \subset \la} P_{\la/\nu}(\alpha(u,ut,\ldots);q,t)Q_{\nu'/\mu'}(\alpha(-u,-uq,\ldots);t,q) =  \bbone(\la=\mu).
\end{align}
\end{prop}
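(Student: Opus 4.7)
The strategy is to consolidate the sum in \eqref{eq:branch_ind} into a single evaluation of a skew Macdonald function at a combined specialization, and then to show that this combined specialization annihilates every positive-degree power sum. The identity then follows from the fact that a homogeneous symmetric function evaluated at the ``zero'' specialization returns its constant term.

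First, I apply the third identity of \Cref{thm:specialize_mac_poly}, with the roles of $q$ and $t$ interchanged, to rewrite
\[
Q_{\nu'/\mu'}(\alpha(-u,-uq,\ldots);t,q) = P_{\nu/\mu}(\beta(-u,-uq,\ldots);q,t).
\]
Setting $\theta_2 := \alpha(u,ut,ut^2,\ldots)$ and $\theta_1 := \beta(-u,-uq,-uq^2,\ldots)$ as $(q,t)$-specializations of $\La$, the left-hand side of \eqref{eq:branch_ind} becomes $\sum_\nu P_{\la/\nu}(\theta_2;q,t)\,P_{\nu/\mu}(\theta_1;q,t)$, where the summation restriction $\mu\subset\nu\subset\la$ is automatic because skew Macdonald functions vanish outside this range.

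Next, I invoke the branching rule. Although \eqref{eq:gen_branch} is stated for literal variable substitution, it extends to arbitrary specializations through the standard Hopf-algebra coproduct on $\La$ defined by $\Delta(p_k) = p_k\otimes 1 + 1\otimes p_k$ (see \cite[Ch.~VI]{mac}), giving
\[
\sum_\nu P_{\la/\nu}(\theta_2;q,t)\,P_{\nu/\mu}(\theta_1;q,t) = P_{\la/\mu}(\theta_1+\theta_2;q,t),
\]
where $\theta_1+\theta_2$ denotes the specialization characterized by $(\theta_1+\theta_2)(p_k) := \theta_1(p_k)+\theta_2(p_k)$, equivalently obtained by combining the alpha- and beta-parameter multisets.

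Finally, a direct computation from \eqref{eq:p_specs} yields, for every $k\geq 1$,
\[
\theta_2(p_k) = \sum_{n\geq 0}(ut^n)^k = \frac{u^k}{1-t^k},\qquad \theta_1(p_k) = (-1)^{k-1}\frac{1-q^k}{1-t^k}\sum_{n\geq 0}(-uq^n)^k = -\frac{u^k}{1-t^k},
\]
so $(\theta_1+\theta_2)(p_k) = 0$ identically. Since $P_{\la/\mu}$ is a polynomial in $p_1,p_2,\ldots$ homogeneous of degree $|\la|-|\mu|$, this combined specialization picks out only its constant term, which is $\bbone(\la=\mu)$.

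I do not anticipate any substantial obstacle; the delicate points are tracking the $q\leftrightarrow t$ swap when invoking the $\beta$-identity of \Cref{thm:specialize_mac_poly}, and appealing to the coproduct on $\La$ to apply the branching rule \eqref{eq:gen_branch} to the non-variable (mixed alpha/beta) specialization $\theta_1+\theta_2$.
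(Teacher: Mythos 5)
Your proof is correct and follows the same route as the paper's: rewrite $Q_{\nu'/\mu'}$ as a $\beta$-specialized $P_{\nu/\mu}$ via \Cref{thm:specialize_mac_poly}, merge the sum into a single $P_{\la/\mu}$ at the combined specialization using the branching rule, and observe from \eqref{eq:p_specs} that this specialization annihilates every $p_k$, leaving only the constant term $\bbone(\la=\mu)$. The only cosmetic difference is that the paper cites Borodin--Corwin for the specialization form of the branching rule, whereas you appeal directly to the coproduct on $\La$, which is the same fact.
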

\begin{proof}
Let $\theta: \La \to \C$ be the specialization with parameters $\alpha(u,ut,\ldots),\beta(-u,-uq,\ldots),\gamma(0)$. We claim that both sides of \eqref{eq:branch_ind} are equal to $P_{\la/\mu}(\alpha(u,ut,\ldots),\beta(-u,-uq,\ldots);q,t) = P_{\la/\mu}(\theta;q,t)$. For the equality with the left hand side of \eqref{eq:branch_ind}, we first note that the extension of the branching rule \eqref{eq:gen_branch} to specializations holds, i.e.
\begin{equation}
P_{\la/\mu}(\phi,\phi';q,t) = \sum_{\nu: \mu \subset \nu \subset \la} P_{\la/\nu}(\phi;q,t)P_{\nu/\mu}(\phi';q,t),
\end{equation}
see e.g. \cite[(2.24)]{borodin2014macdonald}. Applying this with $\phi = \alpha(u,ut,\ldots),\phi'=\beta(-u,-uq,\ldots)$, and then noting that
\begin{equation}
P_{\nu/\mu}(\beta(-u,-uq,\ldots);q,t) = Q_{\nu'/\mu'}(\alpha(-u,-uq,\ldots);t,q)
\end{equation} 
by taking a limit of \Cref{thm:specialize_mac_poly} as $n \to \infty$, yields the desired equality. 

By \eqref{eq:p_specs}, $\theta(p_k) = 0$ for all $k \geq 1$. If $\la=\mu$ then $P_{\la/\mu}(\bx;q,t)=1 \in \La$ since $P_{\la/\mu}(x_1,\ldots,x_n;q,t) = 1$ by \eqref{eq:def_skewP}. Otherwise $P_{\la/\mu}$ is a polynomial in the $p_i$'s with no constant term and hence $\theta(P_{\la/\mu})=0$. This shows the equality with the right hand side of \eqref{eq:branch_ind}. 
\end{proof}

\begin{rmk}\label{rmk:about_polys}
We note that while the proof of \Cref{thm:specs_cancel} passes through specializations of the algebra of symmetric functions, the statement itself is essentially about symmetric polynomials and does not require the notion of specialization. We say `essentially' because $P_{\la/\nu}(\alpha(u,ut,\ldots);q,t)$ is a limit of the polynomials $P_{\la/\nu}(u,ut,\ldots,ut^{n-1};q,t)$ as $n \to \infty$, rather than a bona fide polynomial, and similarly for $Q_{\nu'/\mu'}(\alpha(-u,-uq,\ldots);t,q)$.
\end{rmk}

\subsection{Formulas in the Hall-Littlewood and $q$-Whittaker cases.} In this work we will only need two special cases of Macdonald functions/polynomials: the Hall-Littlewood case $q=0$, and the $q$-Whittaker case $t=0$. For a quick summary of relations between Hall-Littlewood polynomials abelian $p$-groups, see \cite[Sections 5 and 6]{nguyen2022universality}, and for a longer account see \cite[Chapters II and III]{mac}.

\begin{rmk}\label{rmk:qw_and_karp}
For us the $q$-Whittaker polynomials appear combinatorially via the dual specialization in \Cref{thm:specs_cancel}, but they have also been linked to linear algebra over finite fields in Karp-Thomas \cite{karp2022q}, and this should be related to their appearance here.
\end{rmk}

The pure alpha specialization $\alpha(u,ut,\ldots,ut^{n-1})$, often referred as a \emph{principal specialization}, produces simple factorized expressions for Macdonald polynomials. For brevity we give only the Hall-Littlewood case which is needed later. The formula below follows directly from \cite[Ch. III.2, Ex. 1]{mac}.

\begin{prop}[Principal specialization formula]\label{thm:hl_principal_formulas}
For $\la \in \Y_n$,
\begin{align}
\begin{split}
    P_\la(u,ut,\ldots,ut^{n-1};0,t) &= u^{|\la|} t^{n(\la)} \frac{(t;t)_n}{(t;t)_{n-\len(\la)}\prod_{i \geq 1} (t;t)_{m_i(\la)}} \\ 
    Q_\la(u,ut,\ldots,ut^{n-1};0,t) &= u^{|\la|} t^{n(\la)} \frac{(t;t)_n}{(t;t)_{n-\len(\la)}}
\end{split}
\end{align}
Similarly, for $\la \in \Y$,
\begin{align}
\begin{split}
P_\la(\alpha(u,ut,\ldots);0,t) &= u^{|\la|} t^{n(\la)} \frac{1 }{\prod_{i \geq 1} (t;t)_{m_i(\la)}} \\ 
Q_\la(\alpha(u,ut,\ldots);0,t) &= u^{|\la|} t^{n(\la)}.
\end{split}
\end{align}
Here $n(\la)$ is as in \eqref{eq:intro_notation}.
\end{prop}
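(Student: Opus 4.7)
The strategy is to derive all four formulas from Macdonald's classical identity in \cite[Ch.~III.2, Ex.~1]{mac} using homogeneity of $P_\la$, the Hall-Littlewood relation between $P_\la$ and $Q_\la$, and a routine $n \to \infty$ limit. In particular, everything in the $Q$ statements follows from the $P$ statements by a single scalar multiplication, and the infinite-variable statements follow from the finite ones by taking $n \to \infty$, so in the end there is only one genuine computation to cite.

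For the two finite-variable identities, homogeneity of $P_\la(x_1,\ldots,x_n;0,t)$ of total degree $|\la|$ in the $x_i$ gives
\[
P_\la(u, ut, \ldots, ut^{n-1}; 0, t) = u^{|\la|} P_\la(1, t, \ldots, t^{n-1}; 0, t),
\]
and \cite[Ch.~III.2, Ex.~1]{mac} evaluates the right-hand factor as $t^{n(\la)}(t;t)_n / \prod_{i \geq 0}(t;t)_{m_i(\la)}$; rewriting the $i=0$ factor as $(t;t)_{m_0(\la)} = (t;t)_{n-\len(\la)}$ (using the convention $m_0(\la) = n - \len(\la)$) yields the first displayed formula. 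The second follows from the Hall-Littlewood scalar relation
\[
Q_\la(\bx; 0, t) = b_\la(t) P_\la(\bx; 0, t), \qquad b_\la(t) := \prod_{i \geq 1} (t;t)_{m_i(\la)},
\]
which is a direct consequence of $\langle P_\la, P_\la\rangle_{0,t} = 1/b_\la(t)$ (see \cite[Ch.~III, (4.11)]{mac}, noting that specializing the definition of $\langle \cdot,\cdot\rangle_{q,t}$ at $q=0$ recovers the Hall-Littlewood inner product) together with $Q_\la = P_\la/\langle P_\la,P_\la\rangle_{q,t}$. Multiplying the first formula by $b_\la(t)$ exactly cancels the $\prod_{i \geq 1}(t;t)_{m_i(\la)}$ factor and produces the $Q_\la$ formula.

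For the pure alpha case, I would take $n \to \infty$ in the two finite-variable formulas (which requires $|t|<1$ for the specialization $\alpha(u,ut,ut^2,\ldots)$ to satisfy the convergence condition $\sum_i \alpha_i < \infty$). By Remark \ref{rmk:sub_scalars} the finite specialization is just substitution into the variables, and one has pointwise convergence of power sums $p_k(u,ut,\ldots,ut^{n-1}) \to u^k/(1-t^k) = p_k(\alpha(u,ut,\ldots))$. Since $P_\la$ and $Q_\la$ are fixed polynomials in finitely many $p_k$'s, the left-hand sides of the finite formulas converge to the corresponding infinite-variable specializations. On the right-hand side, $\len(\la)$ is fixed, so $(t;t)_n/(t;t)_{n-\len(\la)} \to 1$ as $n \to \infty$, directly recovering the last two identities.

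The argument presents no real obstacle: it amounts to citing Macdonald's classical principal specialization identity and performing elementary algebraic simplifications. The only mild subtlety is justifying the $n \to \infty$ passage, but this is immediate from continuity of polynomial evaluation in the power sum generators $p_k$.
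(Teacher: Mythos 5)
Your proposal is correct and takes essentially the same route as the paper, which simply states that the formula ``follows directly from [Macdonald, Ch.~III.2, Ex.~1]'' without elaborating. You have spelled out the implicit steps (homogeneity to pull out $u^{|\la|}$, the relation $Q_\la = b_\la(t) P_\la$ with $b_\la(t)=\prod_{i\ge 1}(t;t)_{m_i(\la)}$, and the $n\to\infty$ limit with $(t;t)_n/(t;t)_{n-\len(\la)}\to 1$), all of which are routine and correct.
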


For the next formula we use the \emph{$q$-binomial coefficient}
\begin{equation}
\sqbinom{a}{b}_q := \frac{(q;q)_a}{(q;q)_b(q;q)_{a-b}}
\end{equation}
for $0 \leq b \leq a$, where we recall the $q$-Pochhammer symbol \eqref{eq:def_qp}. 

\begin{lemma}[{$q$-Whittaker branching rule}]\label{thm:qw_branch_formulas}
Let $\la,\mu \in \Y$ with $\mu \prec \la$. Then
\begin{align}\label{eq:qw_branch}
\begin{split}
P_{\la/\mu}(x;q,0) &= x^{|\la|-|\mu|} \prod_{i=1}^{\len(\mu)} \sqbinom{\la_i-\la_{i+1}}{\la_i-\mu_i}_q\\ 
Q_{\la/\mu}(x;q,0) &= x^{|\la|-|\mu|} \frac{1}{(q;q)_{\la_1-\mu_1} }\prod_{i=1}^{\len(\la)-1} \sqbinom{\mu_i-\mu_{i+1}}{\mu_i-\la_{i+1}}_q.
\end{split}
\end{align}
If it is not true that $\mu \prec \la$ then all of the above are $0$.
\end{lemma}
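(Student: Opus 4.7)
The plan is to derive both formulas by specializing the general Pieri coefficients for single-variable skew Macdonald polynomials at $t = 0$. From \cite[VI.(7.14')]{mac}, for any $\la, \mu \in \Y$,
\[
P_{\la/\mu}(x; q, t) = \psi_{\la/\mu}(q, t) \, x^{|\la| - |\mu|}, \qquad Q_{\la/\mu}(x; q, t) = \varphi_{\la/\mu}(q, t) \, x^{|\la| - |\mu|},
\]
and both Pieri coefficients vanish unless $\la/\mu$ is a horizontal strip, equivalently $\mu \prec \la$. (One can also see this combinatorially: a semistandard tableau of skew shape $\la/\mu$ in a single letter has all entries equal, which is column-strict only when every column of $\la/\mu$ contains at most one box.) This handles the vanishing assertion of the lemma.

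What remains is to evaluate $\psi_{\la/\mu}(q, 0)$ and $\varphi_{\la/\mu}(q, 0)$. I would invoke the explicit product formula \cite[VI.(6.24)]{mac}, which expresses each Pieri coefficient as a finite product of ratios of the form $(1 - q^{a(s)} t^{\ell(s) + 1})/(1 - q^{a(s) + 1} t^{\ell(s)})$, indexed by boxes $s$ in the diagrams of $\la$ and $\mu$, where $a(s)$ and $\ell(s)$ are arm and leg lengths. Setting $t = 0$ kills every factor in which $t$ appears with positive exponent, so only boxes with leg length zero (boxes at the bottom of their column) survive. Grouping the surviving $(1 - q^{\bullet})$ factors by row index of $\la$ produces $q$-Pochhammer symbols of the form $(q;q)_{\la_i - \la_{i+1}}$, $(q;q)_{\la_i - \mu_i}$, and $(q;q)_{\mu_i - \la_{i+1}}$, which repackage directly into the $q$-binomials $\sqbinom{\la_i - \la_{i+1}}{\la_i - \mu_i}_q$ in the $\psi$ case and $\sqbinom{\mu_i - \mu_{i+1}}{\mu_i - \la_{i+1}}_q$ in the $\varphi$ case. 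The solitary prefactor $1/(q;q)_{\la_1 - \mu_1}$ in $\varphi$ reflects a boundary contribution at the top of the horizontal strip which is present in the $\varphi$-product but has no counterpart in $\psi$; it is precisely the surviving factor from the $\ell(s) = 0$ boxes in the topmost row of $\la$ that lie outside of $\mu$.

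The main obstacle is entirely bookkeeping: tracking which $(a(s), \ell(s))$ pair attaches to which row of $\la/\mu$ after the $t \to 0$ limit, and correctly assembling the surviving $q$-Pochhammer factors into the row-indexed $q$-binomials of the lemma. No new ideas beyond the standard arm/leg combinatorics of \cite[Ch. VI]{mac} are required. As a sanity check, one can verify the simplest instances directly: with $\la = (n)$ and $\mu = \emptyset$ the formulas reduce respectively to $P_{(n)}(x;q,0) = x^n$ and $Q_{(n)}(x;q,0) = x^n/(q;q)_n$, which are the well-known one-row specializations (the latter following from the $q$-binomial theorem applied to the generating function $\sum_{n \geq 0} Q_{(n)}(x;q,t) z^n = (txz;q)_\infty/(xz;q)_\infty$ at $t=0$).
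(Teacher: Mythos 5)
The paper does not prove this lemma; it is stated as a standard fact about $q$-Whittaker ($t=0$) specializations of Macdonald polynomials. Your route is mathematically sound: in a single variable the skew functions reduce to $\psi_{\la/\mu}(q,t)\,x^{|\la|-|\mu|}$ and $\varphi_{\la/\mu}(q,t)\,x^{|\la|-|\mu|}$, vanishing unless $\la/\mu$ is a horizontal strip (equivalently $\mu\prec\la$), and sending $t\to 0$ in the arm/leg product of \cite[Ch.~VI, \S6]{mac} kills every box of positive leg length, leaving only factors $1/(1-q^{a(s)+1})$ from leg-zero boxes. The one-row sanity check and the observation that the $\emptyset\prec(n)$ case recovers $Q_{(n)}(x;q,0)=x^n/(q;q)_n$ are both correct.

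The shortfall is that the proof stops at ``assemble the surviving $q$-Pochhammer factors into $q$-binomials,'' which is asserted rather than carried out, and this bookkeeping is precisely where the content of the lemma lives (your explanation of where the $1/(q;q)_{\la_1-\mu_1}$ prefactor originates, in particular, is more heuristic than demonstrated). A cleaner way to close the gap is to establish only the $\psi$ formula from the leg-zero boxes (these group neatly by rows of $\la$: in row $i$ they occupy columns $\la_{i+1}+1,\dots,\la_i$, giving $(q;q)$-factors that split at column $\mu_i$ into the numerator and denominators of $\sqbinom{\la_i-\la_{i+1}}{\la_i-\mu_i}_q$), and then deduce the $\varphi$ formula for free from the standard identity $\varphi_{\la/\mu}=(b_\la/b_\mu)\psi_{\la/\mu}$ together with $b_\la(q,0)=\prod_{i\geq 1}(q;q)_{\la_i-\la_{i+1}}^{-1}$; the $(q;q)$-factors reindex and telescope, and the boundary term $1/(q;q)_{\la_1-\mu_1}$ falls out automatically rather than needing a separate explanation.
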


\section{Proofs and extensions}\label{sec:moment_method}

We recall that given two finite groups $G,H$, we write $\Sur(G,H)$ for the set of surjective group homomorphisms $\phi:G \to H$. We require a result relating the number of such surjections to the Hall-Littlewood polynomials.

\begin{prop}[{\cite[Proposition 6.2]{nguyen2022universality}}]\label{thm:hl_sur}
Fix a prime $p$, let $t=1/p$, let $\la,\mu \in \Y$, and let $G_\la,G_\mu$ be as in \eqref{eq:intro_notation}. Then
\begin{equation}
\#\Sur(G_\la,G_\mu) = \frac{P_{\la/\mu}(t,t^2,\ldots;0,t)}{P_\la(t,t^2,\ldots;0,t)Q_\mu(1,t,\ldots;0,t)}.
\end{equation}
\end{prop}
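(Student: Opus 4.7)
The plan is to derive this formula from the Hall algebra interpretation of Hall-Littlewood polynomials, as developed in \cite[Chapter II]{mac}.

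First, I would decompose the surjection count group-theoretically: every surjection $\phi: G_\la \twoheadrightarrow G_\mu$ has a kernel $K = \ker \phi$ and induces an isomorphism $G_\la/K \xrightarrow{\sim} G_\mu$, with exactly $|\Aut(G_\mu)|$ surjections associated to each such subgroup. Hence
\[
\#\Sur(G_\la, G_\mu) = |\Aut(G_\mu)| \sum_{\nu \in \Y} g_{\mu\nu}^\la(p),
\]
where $g_{\mu\nu}^\la(p)$ is the Hall polynomial counting subgroups $K \cong G_\nu$ of $G_\la$ with $G_\la/K \cong G_\mu$. I would then use the classical formula $|\Aut(G_\mu)| = t^{-|\mu|-2n(\mu)} \prod_i (t;t)_{m_i(\mu)}$ (with $t=1/p$) to convert the automorphism count into Hall-Littlewood-friendly factors.

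Second, I would compute the denominator on the right-hand side explicitly from the principal specialization formulas of \Cref{thm:hl_principal_formulas}, obtaining
\[
P_\la(t, t^2, \ldots; 0, t)\, Q_\mu(1, t, t^2, \ldots; 0, t) = \frac{t^{|\la|+n(\la)+n(\mu)}}{\prod_i (t;t)_{m_i(\la)}}.
\]
The key remaining step is to identify the numerator $P_{\la/\mu}(t, t^2, \ldots; 0, t)$ as an explicit scalar multiple of $\sum_\nu g_{\mu\nu}^\la(p)$, up to a power of $t$ and $(t;t)$-factors that recombine with $|\Aut(G_\mu)|$. For this I would apply the branching rule $P_\la(x_0, x_1, \ldots; 0, t) = \sum_\kappa P_\kappa(x_0; 0, t) P_{\la/\kappa}(x_1, \ldots; 0, t)$ to the principal specialization, exploiting its shift invariance ($x_i \mapsto t^i$ versus $x_i \mapsto t^{i+1}$), and iterate. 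This realizes skew Hall-Littlewood polynomials at principal specialization as generating functions counting chains of subgroups in $G_\la$ with specified successive quotients, which is essentially the classical content of \cite[Chapter III]{mac}.

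The main obstacle is pure bookkeeping: several $t$-powers ($|\la|$, $n(\la)$, $n(\mu)$) and products of $(t;t)_n$-factors appear, and one must verify that all factors coming from $|\Aut(G_\mu)|$ conspire with those produced by the three principal specializations to produce exactly the ratio in the statement. No new idea is required beyond careful assembly of standard formulas from \cite[Chapters II-III]{mac}, which is why the result is taken as a black box here and cited from \cite{nguyen2022universality}.
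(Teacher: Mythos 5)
The paper itself gives no proof of this proposition; it is imported as a black box from \cite[Proposition 6.2]{nguyen2022universality}, so there is no internal argument to compare against. Judged on its own merits, your sketch is on the right track and all the explicit identifications you make check out: the kernel decomposition $\#\Sur(G_\la,G_\mu)=|\Aut(G_\mu)|\sum_\nu g^\la_{\mu\nu}(p)$ is correct, $|\Aut(G_\mu)|=t^{-|\mu|-2n(\mu)}\prod_i(t;t)_{m_i(\mu)}$ is the standard formula (it also drops out of the claimed identity at $\la=\mu$), and the denominator $P_\la(t,t^2,\ldots;0,t)Q_\mu(1,t,\ldots;0,t)=t^{|\la|+n(\la)+n(\mu)}/\prod_i(t;t)_{m_i(\la)}$ follows from \Cref{thm:hl_principal_formulas}.

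The weak link is the step ``apply the branching rule $\ldots$ and iterate.'' Iterated single-variable branching gives a tableau-type sum for $P_{\la/\mu}$ at the principal specialization, and it is not immediate how that sum is a weighted count of subgroups with quotient type $\mu$ (as opposed to chains of subgroups, or tableaux with no a priori group-theoretic meaning). The route that closes the gap cleanly, and is presumably what \cite{nguyen2022universality} does, avoids iteration altogether: expand $Q_{\la/\mu}=\sum_\nu f^\la_{\mu\nu}(t)\,Q_\nu$ in the dual basis, use Macdonald's Hall-algebra identity $f^\la_{\mu\nu}(t)=t^{\,n(\la)-n(\mu)-n(\nu)}\,g^\la_{\mu\nu}(t^{-1})$ together with $Q_\nu(1,t,t^2,\ldots;0,t)=t^{n(\nu)}$ to get $Q_{\la/\mu}(1,t,t^2,\ldots;0,t)=t^{\,n(\la)-n(\mu)}\sum_\nu g^\la_{\mu\nu}(p)$, then pass to $P_{\la/\mu}$ via $Q_{\la/\mu}=\bigl(\prod_i(t;t)_{m_i(\la)}/\prod_i(t;t)_{m_i(\mu)}\bigr)P_{\la/\mu}$ and to the argument $(t,t^2,\ldots)$ by homogeneity of degree $|\la|-|\mu|$. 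Assembling these with your $|\Aut(G_\mu)|$ and denominator formulas yields the proposition directly; everything cancels as it must. So: correct overall strategy and correct bookkeeping, but replace the iterated-branching heuristic by the skew expansion into the $Q_\nu$ basis to make the numerator identification airtight.
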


We now prove the main result stated in the Introduction.

\begin{proof}[Proof of \Cref{thm:invert_moments_no_level}]
By \Cref{thm:hl_sur},
\begin{equation}
M_{G_\mu} := \E[\#\Sur(G,G_\mu)] = \sum_{\la \in \Y} \Pr(G \cong G_\la)\frac{P_{\la/\mu}(t,t^2,\ldots;0,t)}{P_\la(t,t^2,\ldots;0,t)Q_\mu(1,t,\ldots;0,t)}. 
\end{equation}
Specializing \Cref{thm:specs_cancel} to the Hall-Littlewood case and $u=t$ yields
\begin{equation}\label{eq:cancel_for_moments}
\sum_{\substack{\mu \in \Y \\ \mu \supset \nu}} P_{\la/\mu}(t,t^2,\ldots;0,t) Q_{\mu'/\nu'}(-t;t,0) = \bbone(\la=\nu).
\end{equation}
Hence 
\begin{align}\label{eq:prob_from_moments}
\begin{split}
&\sum_{\substack{\mu \in \Y \\ \mu \supset \nu}} Q_{\mu'/\nu'}(-t;t,0)Q_\mu(1,t,\ldots;0,t)\E[\#\Sur(G,G_\mu)] \\
&= \sum_{\substack{\mu \in \Y \\ \mu \supset \nu}} \sum_{\la \in \Y} \Pr(G \cong G_\la) \frac{P_{\la/\mu}(t,t^2,\ldots;0,t)}{P_\la(t,t^2,\ldots;0,t)}Q_{\mu'/\nu'}(-t;t,0) \\ 
&= \sum_{\la \in \Y} \frac{\Pr(G \cong G_\la) }{P_\la(t,t^2,\ldots;0,t)} \sum_{\substack{\mu \in \Y \\ \mu \supset \nu}}  P_{\la/\mu}(t,t^2,\ldots;0,t) Q_{\mu'/\nu'}(-t;t,0) \\ 
&= \sum_{\la \in \Y} \frac{\Pr(G \cong G_\la) }{P_\la(t,t^2,\ldots;0,t)}\bbone(\la=\nu) \\ 
&= \frac{\Pr(G \cong G_\nu)}{P_\nu(t,t^2,\ldots;0,t)}
\end{split}
\end{align}
where we used \eqref{eq:cancel_for_moments}, and the sums commute because the one in \eqref{eq:invert_moments_no_level} is absolutely convergent and the sum defining $M_{G_\mu}$ has all terms nonnegative, so the double sum is absolutely convergent.

By \Cref{thm:hl_principal_formulas} and \Cref{thm:qw_branch_formulas}
\begin{equation}\label{eq:compute_hl_product}
P_\nu(t,t^2,\ldots;0,t)Q_{\mu'/\nu'}(-t;t,0)Q_\mu(1,t,\ldots;0,t) = \frac{(-1)^{|\mu|-|\nu|} t^{n(\nu)+n(\mu)+|\mu|}}{\prod_{i \geq 1} (t;t)_{\mu_i'-\nu_i'}(t;t)_{\nu_i'-\mu_{i+1}'}}
\end{equation}
when $\mu' \succ \nu'$, and is $0$ otherwise. Combining \eqref{eq:prob_from_moments} with \eqref{eq:compute_hl_product} shows \eqref{eq:invert_moments_no_level}. Uniqueness is clear from \eqref{eq:invert_moments_no_level}.
\end{proof}

Given a random abelian group $G$, it is often desirable to study the quotient $G/p^dG$. For instance, in \cite{wood2017distribution} and subsequent works on cokernels of random integer matrices, one always reduces the cokernel modulo some integer $a$, for then one only has to study random matrices over a finite ring $\Z/a\Z$. \Cref{thm:invert_moments_no_level} admits the following refinement, which concerns the $p^d$-torsion group $G/p^dG$ and consequently allows $G$ to be potentially infinite (though its $p$-part must be finitely generated for the moment finiteness condition to be satisfied). In what follows we let $\Y_d' = \{\la \in \Y: \len(\la') \leq d\}$.

\begin{thm}\label{thm:invert_moments_fixed_level}
Fix $p$ prime and $d \in \N$, and let $G$ be a random abelian group with finite $H$-moments $M_H := \E[\#\Sur(G,H)]$ for each finite abelian $p^d$-torsion group $H$. Then for any $\nu \in \Y_d'$,
\begin{equation}\label{eq:invert_moments_fixed_level}
\Pr(G/p^d G \cong G_\nu) = \sum_{\substack{\mu \in \Y_d' \\ \mu' \succ \nu'}} \frac{(-1)^{|\mu|-|\nu|} t^{n(\nu)+n(\mu)+|\mu|}}{\prod_{i \geq 1} (t;t)_{\mu_i'-\nu_i'}(t;t)_{\nu_i'-\mu_{i+1}'}} M_{G_\mu},
\end{equation}
where $t=1/p$, provided that the sum on the right hand side converges absolutely.
\end{thm}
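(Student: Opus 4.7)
The plan is to deduce \Cref{thm:invert_moments_fixed_level} from \Cref{thm:invert_moments_no_level} applied to $G' := G/p^d G$. The driving observation is that any surjection from $G$ onto a $p^d$-torsion group factors uniquely through $G'$, so $\#\Sur(G, G_\mu) = \#\Sur(G', G_\mu)$ for every $\mu \in \Y_d'$, while $\#\Sur(G', G_\mu) = 0$ for $\mu \notin \Y_d'$. Consequently the $H$-moments of $G'$ are finite for every finite abelian $p$-group $H$ (coinciding with those of $G$ when $H$ is $p^d$-torsion, and vanishing otherwise).

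A preliminary step is to verify that $G'$ is almost surely finite, since $G$ itself is permitted to be infinite. This is forced by the finiteness of $M_{\Z/p\Z}$: from $\#\Sur(G,\Z/p\Z) = p^{\dim_{\F_p}(G/pG)}-1$ one concludes that $G/pG$ is a.s.\ finite, hence so is the quotient $G/p^d G$. Once this is established, \Cref{thm:invert_moments_no_level} applies to $G'$, and the sum over $\mu \in \Y$ on the right-hand side of \eqref{eq:invert_moments_no_level} collapses to a sum over $\mu \in \Y_d'$ because the moments for $\mu \notin \Y_d'$ vanish. The absolute-convergence hypothesis of \Cref{thm:invert_moments_fixed_level} is precisely what is needed, and the resulting identity is \eqref{eq:invert_moments_fixed_level} with $\Pr(G/p^d G \cong G_\nu)$ on the left-hand side.

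The only mild obstacle is the finiteness step above, which is soft but must be argued carefully. An equivalent route avoiding this reduction is to repeat the proof of \Cref{thm:invert_moments_no_level} verbatim with all sums restricted to $\Y_d'$. No new combinatorial input is required: in the Hall-Littlewood specialization of \Cref{thm:specs_cancel} the nonzero contributions satisfy $\mu \subset \la$, so restricting $\la \in \Y_d'$ automatically restricts $\mu \in \Y_d'$, and the cancellation
\begin{equation*}
\sum_{\substack{\mu \in \Y_d' \\ \mu \supset \nu}} P_{\la/\mu}(t,t^2,\ldots;0,t)\, Q_{\mu'/\nu'}(-t;t,0) = \bbone(\la = \nu)
\end{equation*}
continues to hold for $\la, \nu \in \Y_d'$. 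After this substitution, the rest of the derivation — swapping sums (justified as in the proof of \Cref{thm:invert_moments_no_level} by absolute convergence of the outer $\mu$-sum together with nonnegativity of the inner $\la$-sum) and extracting the explicit coefficient via \Cref{thm:hl_principal_formulas} and \Cref{thm:qw_branch_formulas} as in \eqref{eq:compute_hl_product} — transcribes verbatim.
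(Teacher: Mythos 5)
Your first route is exactly the paper's proof: apply \Cref{thm:invert_moments_no_level} to $G' := G/p^dG$, note that surjections onto $p^d$-torsion groups factor through $G'$ and that $\#\Sur(G',G_\mu)=0$ for $\mu\notin\Y_d'$, so the sum in \eqref{eq:invert_moments_no_level} collapses to $\Y_d'$. Your preliminary check that $G/p^dG$ is a.s.\ finite (so that \Cref{thm:invert_moments_no_level} actually applies) is a worthwhile piece of care that the paper leaves implicit in the proof, only alluding to it in the surrounding prose.
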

\begin{proof}
Simply note that if $G$ in \Cref{thm:invert_moments_no_level} is $p^d$-torsion, then the $H$-moments $M_H$ are only nonzero if $H$ is $p^d$-torsion, so the summands on the right hand side of \eqref{eq:invert_moments_no_level} are only nonzero if $\mu \in \Y_d'$.
\end{proof}

\begin{rmk}
Note that the condition $\mu' \succ \nu'$ in the sum implies that of the nontrivial conjugate parts $\mu_1',\ldots,\mu_d'$, all except $\mu_1'$ are bounded above and below.
\end{rmk}

\begin{rmk}
One may also show \Cref{thm:invert_moments_fixed_level} by noting that for $p^d$-torsion $H$, $\#\Sur(G,H)$ depends only on $G/p^dG$, and then doing similar manipulations to \eqref{eq:prob_from_moments} with a sum over $\mu_d'$. The dependence of the moments on $G/p^dG$ may be seen directly in symmetric function theory (though this would be a very convoluted way to arrive at it): By the explicit formulas \Cref{thm:hl_principal_formulas} and their generalization \cite[Theorem 3.3]{vanpeski2021halllittlewood} to skew Hall-Littlewood functions, for any $\lambda \in \Y, \mu \in \Y_d'$ we have
\begin{equation}
\label{eq:hl_quotient}
\frac{P_{\lambda/\mu}(1,t,\ldots; 0,t)}{P_\lambda(1,t,\ldots;0,t)} = \prod_{i=1}^d t^{\binom{\lambda_i' - \mu_i'}{2} - \binom{\lambda_i'}{2}} (t^{1+\lambda_i'-\mu_i'};t)_{\mu_i' - \mu_{i+1}'},
\end{equation}
which is independent of $\la_{d+1}',\la_{d+2}',\ldots$. In the special case of \Cref{thm:invert_moments_fixed_level} given in \cite[Lemma 4.2]{van2023local}, we were not yet aware of the simpler proof of \Cref{thm:invert_moments_fixed_level} via \Cref{thm:invert_moments_no_level} and basic group theory given above, and so argued directly in this way similarly to \eqref{eq:prob_from_moments} using \eqref{eq:hl_quotient}.
\end{rmk}

\subsection{Multiple primes.} We now extend \Cref{thm:invert_moments_no_level} to multiple primes. Because we are considering more than one prime, we change notation slightly and write $G_{\la,p} := \bigoplus_{i \geq 1} \Z/p^{\la_i}\Z$ for partitions $\la \in \Y$ (which before we denoted by $G_\la$ with $p$ implicit).

\begin{defi}
For any finite collection $P=\{p_1,\ldots,p_k\}$ of primes, we let $\cA_P$ the set of (isomorphism classes of) finite abelian groups $H$ such that every prime factor of $\#H$ lies in $P$. We further define, for any partitions $\la(1),\ldots,\la(k) \in \Y$, the group
\begin{equation}
G_{\la(1),\ldots,\la(k)}(P) := \bigoplus_{i=1}^k G_{\la(i),p_i}.
\end{equation}
\end{defi}

It is clear that $\cA_P = \{G_{\la(1),\ldots,\la(|P|)}(P): (\la(1),\ldots,\la(|P|)) \in \Y^{|P|}\}$.

\begin{thm}\label{thm:multiple_primes}
Let $P = \{p_1,\ldots,p_k\}$ be a set of primes and $G$ be a random group in $\cA_P$ with finite $H$-moments $M_H := \E[\#\Sur(G,H)]$ for each $H \in \cA_P$. Then for any $(\nu(1),\ldots,\nu(k)) \in \Y^k$,
\begin{multline}
\label{eq:invert_moments_multiple_primes}
\Pr(G \cong G_{\nu(1),\ldots,\nu(k)}(P)) \\ 
= \sum_{\substack{(\mu(1),\ldots,\mu(k)) \in \Y^k \\ \mu(i)' \succ \nu(i)' \text{ for all }i}} \prod_{i=1}^k \frac{(-1)^{|\mu(i)|-|\nu(i)|} p_i^{-n(\nu(i))-n(\mu(i))-|\mu(i)|}}{\prod_{j \geq 1} (p_i^{-1};p_i^{-1})_{\mu(i)_j'-\nu(i)_j'}(p_i^{-1};p_i^{-1})_{\nu(i)_j'-\mu(i)_{j+1}'}} M_{G_{\mu(1),\ldots,\mu(k)}(P)},
\end{multline}
provided the sum on the right hand side converges absolutely. Furthermore, the law of $G$ is the unique probability measure on $\cA_P$ with moments $M_H$.
\end{thm}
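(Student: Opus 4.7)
The plan is to factorize everything over primes and reduce directly to the proof of \Cref{thm:invert_moments_no_level}. Each $G \in \cA_P$ has a canonical Sylow decomposition $G = \bigoplus_{i=1}^k G^{(i)}$ with $G^{(i)}$ a $p_i$-group, so a random $G \in \cA_P$ is specified by the joint law of $(G^{(1)},\ldots,G^{(k)})$. Since any homomorphism between groups in $\cA_P$ splits along Sylow subgroups, we have $\#\Sur(G, G_{\mu(1),\ldots,\mu(k)}(P)) = \prod_{i=1}^k \#\Sur(G^{(i)}, G_{\mu(i),p_i})$, and combined with \Cref{thm:hl_sur} applied separately at each prime this expresses
\[
M_{G_{\mu(1),\ldots,\mu(k)}(P)} = \sum_{(\lambda(1),\ldots,\lambda(k)) \in \Y^k} \Pr(G \cong G_{\lambda(1),\ldots,\lambda(k)}(P)) \prod_{i=1}^k \frac{P_{\lambda(i)/\mu(i)}(t_i, t_i^2, \ldots; 0, t_i)}{P_{\lambda(i)}(t_i, t_i^2, \ldots; 0, t_i)\, Q_{\mu(i)}(1, t_i, \ldots; 0, t_i)}
\]
with $t_i := 1/p_i$.

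Next, I would multiply by $\prod_i Q_{\mu(i)'/\nu(i)'}(-t_i;t_i,0)\, Q_{\mu(i)}(1,t_i,\ldots;0,t_i)$ and sum over all tuples $(\mu(1),\ldots,\mu(k))$ with $\mu(i) \supset \nu(i)$ for each $i$, exactly mirroring the single-prime manipulation \eqref{eq:prob_from_moments}. Because both the extra weight and the Hall-Littlewood ratios factor completely across the primes, the sum decouples into a product of $k$ independent inner sums indexed by the $\mu(i)$, and for each $i$ the cancellation identity \eqref{eq:cancel_for_moments} applies and yields $\bbone(\lambda(i) = \nu(i))$. Taking the product collapses the outer sum over $(\lambda(1),\ldots,\lambda(k))$ to the single term $(\nu(1),\ldots,\nu(k))$, leaving $\Pr(G \cong G_{\nu(1),\ldots,\nu(k)}(P))\big/\prod_i P_{\nu(i)}(t_i,t_i^2,\ldots;0,t_i)$. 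Evaluating each prime-$p_i$ factor $P_{\nu(i)}(t_i,t_i^2,\ldots;0,t_i)\, Q_{\mu(i)'/\nu(i)'}(-t_i;t_i,0)\, Q_{\mu(i)}(1,t_i,\ldots;0,t_i)$ via \Cref{thm:hl_principal_formulas} and \Cref{thm:qw_branch_formulas} then reproduces the explicit constant in \eqref{eq:invert_moments_multiple_primes}, including the implicit constraint $\mu(i)' \succ \nu(i)'$ (outside which the $Q_{\mu(i)'/\nu(i)'}$ factor vanishes).

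The only real technical point --- and it is the main obstacle --- is justifying the interchange of the sum over $(\lambda(i))_i$ implicit in the moments with the outer sum over $(\mu(i))_i$. This is handled exactly as in the single-prime proof: the hypothesis that the right-hand side of \eqref{eq:invert_moments_multiple_primes} converges absolutely, combined with the nonnegativity of the summands in the expansion of each $M_{G_{\mu(1),\ldots,\mu(k)}(P)}$, makes the full double sum absolutely convergent and lets Fubini/Tonelli apply. Uniqueness is then immediate, since \eqref{eq:invert_moments_multiple_primes} determines every joint probability as a specific convergent linear combination of the moments $M_H$, so two probability measures on $\cA_P$ with the same moments must coincide on every isomorphism class.
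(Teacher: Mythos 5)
Your proof is correct, but it takes a genuinely different route from the paper. The paper first observes that it suffices to prove \eqref{eq:invert_moments_multiple_primes} for a \emph{deterministic} $G$ (i.e.\ to show that the right-hand side, with $\#\Sur(G,\cdot)$ in place of the moments, is the indicator function of $G \cong G_{\nu(1),\ldots,\nu(k)}(P)$), since the sums are then finite and the random case follows by taking a linear combination weighted by $\Pr(G = H)$ and invoking the absolute convergence hypothesis for the interchange. With this reduction in hand, the paper inducts on $k$: factoring both the indicator and $\#\Sur$ along Sylow subgroups lets one multiply the $(k-1)$-prime identity by the $1$-prime identity, so \Cref{thm:invert_moments_no_level} is used as a black box and no further symmetric-function manipulation is needed. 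You instead re-run the Hall--Littlewood computation \eqref{eq:prob_from_moments} directly in the multi-prime setting, using Sylow factorization to write $\#\Sur$ and hence the moments as products over primes, decoupling the sum over tuples $(\mu(i))_i$, and applying the cancellation identity \eqref{eq:cancel_for_moments} at each prime separately. Both arguments are sound. The paper's induction is leaner in that it avoids repeating the symmetric-function bookkeeping and cleanly separates the finite combinatorial identity from the convergence issue; your direct argument is closer in spirit to the single-prime proof and keeps the role of the factorized Hall--Littlewood/$q$-Whittaker structure visible at every step, which makes the Fubini/Tonelli justification you give (nonnegativity of the moment expansion plus the assumed absolute convergence of the right-hand side) the main thing to verify --- and you handle it correctly.
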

\begin{proof}
It suffices to show the case of constant $G$, i.e. the equality 
\begin{multline}
\label{eq:invert_moments_multiple_primes_equivalent}
\bbone(G \cong G_{\nu(1),\ldots,\nu(k)}(P)) \\ = \sum_{\substack{(\mu(1),\ldots,\mu(k)) \in \Y^k \\ \mu(i)' \succ \nu(i)' \text{ for all }i}} \prod_{i=1}^k \frac{(-1)^{|\mu(i)|-|\nu(i)|} p_i^{-n(\nu(i))-n(\mu(i))-|\mu(i)|}}{\prod_{j \geq 1} (p_i^{-1};p_i^{-1})_{\mu(i)_j'-\nu(i)_j'}(p_i^{-1};p_i^{-1})_{\nu(i)_j'-\mu(i)_{j+1}'}} \#\Sur(G,G_{\mu(1),\ldots,\mu(k)}(P))
\end{multline}
of functions on $\cA_P$, as the general case may be obtained by linear combinations of the above. We do this by induction on $k$, the base case $k=1$ following from \Cref{thm:invert_moments_no_level}. Suppose \eqref{eq:invert_moments_multiple_primes_equivalent} holds for some $k-1$ and let $p_1,\ldots,p_{k}$ be distinct primes. Writing 
\begin{equation}
\bbone(G \cong G_{\nu(1),\ldots,\nu(k)}(P)) = \bbone\left(\bigoplus_{i=1}^{k-1} G_{p_i} \cong \bigoplus_{i=1}^{k-1} G_{\nu(i),p_i}\right) \cdot \bbone(G_{p_k} \cong G_{\nu(k),p_k}),
\end{equation}
we may apply the inductive hypothesis to each factor, and the conclusion follows after noting that 
\begin{equation}
\#\Sur(G,G_{\mu(1),\ldots,\mu(k)}(P)) = \#\Sur\left(\bigoplus_{i=1}^{k-1} G_{p_i},\bigoplus_{i=1}^{k-1} G_{\mu(i),p_i}\right) \cdot \#\Sur(G_{p_k},G_{\mu(k),p_k}).
\end{equation}
Uniqueness is immediate as in \Cref{thm:invert_moments_no_level}.
\end{proof}

\begin{rmk}
The same proof as for \Cref{thm:invert_moments_fixed_level} yields a version of the above formula for $\Pr(G/(p_1^{e_1} \cdots p_k^{e_k}) \cong H)$ in the setting of \Cref{thm:multiple_primes}. It is this version for modules over the finite quotient ring $\Z/(p_1^{e_1} \cdots p_k^{e_k})$ which corresponds to the formula in \cite[Proposition 6.2]{sawin2022moment}, though one must do some calculation with that formula to reduce it to ours.
\end{rmk}

\begin{rmk}\label{rmk:dvr}
If $R$ is a complete discrete valuation ring with maximal ideal $(\omega)$ generated by a uniformizer $\omega$, and finite residue field $R/(\omega) \cong \F_q$, then \Cref{thm:invert_moments_no_level} and \Cref{thm:invert_moments_fixed_level} hold with $\Sur(G,H)$ replaced by surjective $R$-module homomorphisms and $t=p^{-1}$ replaced by $t=q^{-1}$. One only needs that \Cref{thm:hl_sur} holds in this setting, which follows because the number of surjections is known to be a rational function of $q^{-1}$ in this setting, see for instance \cite[Chapter II]{mac}.
\end{rmk}

\bibliographystyle{alpha_abbrvsort}
\bibliography{references.bib}

\end{document}